\newtheorem{theo}{Theorem}[section]
\newtheorem{defi}[theo]{Definition}
\newtheorem{lem}[theo]{Lemma}
\newtheorem{prop}[theo]{Proposition}
\newtheorem{cor}[theo]{Corollary}
\newtheorem{rem}[theo]{Remark}
\newcommand{\R}{\mathbb R}
\newcommand{\C}{\mathbb C}
\newcommand{\Heis}{\mathbb H}
\newcommand{\B}{\mathcal B}
\newcommand{\Proj}{\mathcal P}
\newcommand{\G}{\mathbb G}
\newcommand{\F}{\mathcal F}
\newcommand{\FG}{\mathcal F_{SE(2)}}
\newcommand{\Fock}{\mathbb F}
\newcommand{\PF}[2]{\Proj\Fock^{#1}_{#2}}
\newcommand{\Hil}{\mathcal H}
\newcommand{\Sw}{\mathcal S}
\newcommand{\sn}[1]{\| #1 \| {}_{\Hil_\Omega}}
\newcommand{\spr}[2]{\langle #1 , #2 \rangle_{\Hil_\Omega}}
\newcommand{\dirint}[1]{\int_{#1}^{\oplus}}
\title{Reproducing kernel Hilbert spaces of CR functions for the Euclidean Motion group}
\author{D. Barbieri\footnote{{\tt davide.barbieri8@gmail.com}. CAMS, EHESS/CNRS. 190 av. de France, 75244 Paris. Author partially supported by Progetto Strategico Giovani MIUR-Unibo.}, G. Citti\footnote{{\tt giovanna.citti@unibo.it}. Dept. of Mathematics, p.ta S. Donato 5, 40126 Bologna.}}
\begin{document}

\maketitle

\vspace{-4pt}
\begin{abstract}
We study the geometric structure of the reproducing kernel Hilbert space associated to the continuous wavelet transform generated by the irreducible representations of the Euclidean Motion $SE(2)$. A natural Hilbert norm for functions on the group is constructed that makes the wavelet transform an isometry, but since the considered representations are not square integrable the resulting Hilbert space will not coincide with $L^2(SE(2))$. The reproducing kernel Hilbert subspace generated by the wavelet transform, for the case of a minimal uncertainty mother wavelet, can be characterized in terms of the complex regularity defined by the natural $CR$ structure of the group. Relations with the Bargmann transform are presented.\vspace{3pt}\\
{\tt keywords}: Euclidean Motion group, reproducing kernel Hilbert spaces, uncertainty principle, CR functions.\\
{\tt 2010 MSC}: 46E22, 32V10, 22E45, 81R30, 43A32
\end{abstract}

\section{Introduction}

Several results in the harmonic analysis of the continuous wavelet transform rely on the reproducing property associated to square integrable unitary irreducible representations (UIR) (see \cite[Ch. 14]{Dixmier} and \cite{Fuhr}). The typical setting is the following: let $\G$ be a Lie group with Haar measure $\mu$, $\pi$ a square integrable UIR of $\G$ on the Hilbert space $\Hil$, and $\psi_0$ a nonzero vector in $\Hil$, that we will call mother wavelet. Then for any $f \in \Hil$ the identity
\begin{equation}\label{eq:reproducing0}
f = \int_{\G} \langle f, \pi(g)\psi_0\rangle_{\Hil} \, \pi(g)\psi_0 \, d\mu(g)
\end{equation}
holds in weak sense, that means that for any $f_1, f_2 \in \Hil$ the \emph{analysis operator}
$$
Af(g) \doteq \langle f, \pi(g)\psi_0\rangle_{\Hil}
$$
is an isometry from $\Hil$ to $L^2(\G)$, i.e. it satisfies $\langle f_1, f_2\rangle_{\Hil} = \langle Af_1, Af_2\rangle_{L^2(\G)}$.
\newpage

Classical examples are those associated to the $ax + b$ group and to the Heisenberg group \cite{Daubechies}, but generalizations to several groups were made \cite{Perelomov1972, AAG, Fuhr}. The abstract setting gives rise to a reproducing kernel Hilbert subspace of $L^2(\G)$, that is the target space of the analysis operator. A notable example is given by the case of the Heisenberg group when the mother wavelet is a minimum of the uncertainty principle. In this case the continuous wavelet transform, up to a weight, is the well-known Bargmann transform \cite{BargmannII}, and its target space is completely characterized by a complex regularity condition, i.e. it is a space of entire functions.

Notable applications of such a construction can be found in signal analysis \cite{Grochenig, DKMSST} and path integrals \cite{KlauderPathIntegrals}, while we recall that the issue of complex regularity is a fundamental tool for sampling and interpolation issues \cite{SW,Seip}.
However, when the group $\G$ does not admit square integrable representations, eventually modulo a non-trivial isotropy subgroup (e.g. the center \cite{CorwinGreenleaf}), the reproducing formula (\ref{eq:reproducing0}) does not formally make sense since $Af$ does not belong to $L^2(\G)$.

One of the simplest group that is not square integrable but nevertheless provides interesting applications \cite{IK, CS, Duits} is the group of  rigid motions of the Euclidean plane $SE(2) = R^2_q \rtimes S^1_\theta$, that is the noncommutative Lie group obtained as semidirect product between translation and counterclockwise rotations of the plane (see e.g. \cite{V,Sugiura}), and whose Haar measure is the Lebesgue one. Its composition law is given by $(q',\theta') \cdot (q,\theta) = (q' + r_{\theta'}q, \theta' + \theta)$ where $r_\theta$ is the ordinary counterclockwise rotation of an angle $\theta$.
Its Lie algebra can be defined in terms of left invariant vector fields, which read \cite{CS}
\begin{eqnarray}\label{generators}
X_1 = -\sin\theta\partial_{q_1} + \cos\theta\partial_{q_2} \ , \ X_2 = \partial_{\theta}\\
X_3 = [X_1,X_2] = \cos\theta\partial_{q_1} + \sin\theta\partial_{q_2}\nonumber\ .
\end{eqnarray}

This group is locally equivalent to the three dimensional Heisenberg group, which is its metric tangent cone (in the sense of Gromov, see \cite{Bellaiche}), but globally it is not nilpotent.

Its inequivalent infinite dimensional irreducible representations (see e.g. \cite{Sugiura}) are parametrized by $\R^+ \approx \R^2 / S^1$. For any $\Omega \in \R^+$ there exists an irreducible representation of $SE(2)$ on the Hilbert space $L^2(S^1)$ given by
\begin{equation}\label{eq:S1representation}
\Pi^\Omega(q,\theta) u(\varphi) = e^{-i\Omega (q_1\cos\varphi + q_2\sin\varphi)} u(\varphi - \theta)\ .
\end{equation}

Due to the phase term, these representations are not square integrable, i.e. there exist no $u, v \in L^2(S^1)$ such that the function $\langle \Pi^\Omega(q,\theta) u, v\rangle_{L^2(S^1)}$ belongs to $L^2(SE(2))$.
In order to circumvent this problem, several different strategies were proposed. In \cite{DB}, the author restricted the orbit of the representation to a two dimensional submanifold obtained as cotangent bundle of a coadjoint orbit, obtaining as a byproduct the requirement for a compactly supported admissible mother wavelet; this method could later be extended to general semidirect product groups (see e.g. \cite{AAG}). Following a different approach, in \cite{IK} the authors could re-establish square integrability by making use of reducible representations constructed as direct integrals over finite intervals of parameters, a procedure that does not require compact support for the mother wavelet. Working with reducible representations and obtaining  admissibility conditions over the fiducial vector turns out to be a rather general and consistent procedure \cite{Fuhr} and for the case of the $SE(2)$ group it was recently applied also to the left quasi-regular representation, in the framework of image processing \cite{Duits}.

We will propose a different construction, that does not rely on square integrability but rather on the characterization of the reproducing kernel Hilbert space of the continuous wavelet transform, defined by the irreducible representation on the whole group. We consider, for any normalized vector $u_0 \in L^2(S^1)$, the analysis operator acting on vectors $\Phi \in L^2(S^1)$ as
\begin{equation}\label{eq:analysis}
A^\Omega : \Phi \mapsto A^\Omega\Phi(q,\theta) \doteq \langle \Pi^\Omega(q,\theta) u_0, \Phi\rangle_{L^2(S^1)} .
\end{equation}
We will first show that $A^\Omega$ is an isometry on a Hilbert space of distributions, that we will denote with $\Hil_\Omega(SE(2))$, which is a direct summand of $L^2(SE(2))$ and provides an explicit expression for the reproducing kernel Hilbert norm. The construction of such a Hilbert space, together with the proof of its main properties, will be worked out in Section \ref{sec:reproducing}.
Subsequently, we will choose as mother wavelet a minimizer of the generalized uncertainty principle associated to the group $SE(2)$ (see \cite{FS}), given by
\begin{equation}\label{eq:minmu}
u^\mu(\varphi) = c_\mu e^{\mu \cos\varphi}
\end{equation}
where $\mu$ is a nonzero positive parameter and $c_\mu$ is a normalization constant.  
In this case we will show in Section \ref{sec:CR} that the reproducing kernel Hilbert space naturally associated to (\ref{eq:analysis}) is characterized by a complex regularity condition which generalizes the analiticity condition to odd dimensional manifold. This notion of complex regularity is called $CR$ condition (see e.g. \cite{BER}), and for any $\lambda \in \R$ a function $F : \R^2 \times S^1 \to \C$ will be said to be $CR^\lambda$ if it satisfies
$$
(X_2 + i \lambda X_1) F = 0
$$
where $X_1$ and $X_2$ are the first order differential operators (\ref{generators}).
Our main theorem will then be the following.
\begin{theo}\label{theo:main}
Let $\Omega \in \R^+$, $\mu \in \R \setminus \{0\}$ and set $\lambda = \frac{\mu}{\Omega}$. Then
\begin{itemize}
\item[i)] for all unit vector $u_0 \in L^2(S^1)$, the map $A^\Omega$ given by (\ref{eq:analysis}) is an isometry of $L^2(S^1)$ on $\Hil_\Omega(SE(2))$;
\item[ii)] for $u^\mu$ given by (\ref{eq:minmu}), $A^\Omega$ maps $L^2(S^1)$ onto $\Hil_\Omega(SE(2)) \cap CR^\lambda$.
\end{itemize}
\end{theo}
\noindent The proof of $i)$ will be given in Section \ref{sec:reproducing}, while the proof of $ii)$ will be given in Section \ref{sec:CR}.
\newpage

This $CR$ regularity is directly related to the contact geometry of the group, defined by the nonintegrable distribution of tangent hyperplanes generated by the vector fields $X_1$ and $X_2$. Consider the manifold $\mathcal{M} = \R^2 \times S^1$ as a real submanifold of $\R^4 \approx \C^2$ endowed with the same complex structure used for the ordinary Bargmann transform:
\begin{equation}\label{eq:realsubm}
\mathcal{M} = \{(z_1,z_2) \in \C^2, z_j = (p_j + i q_j)\, j=1,2 \, , \ \textrm{such that} \ p_1^2 + p_2^2 = 1\}\ .
\end{equation}
The complex structure of $\C^2$ induces an almost complex structure on the contact planes, with respect to which the vector $X_2 + i X_1$ is antiholomorphic. For this reason we will call the analysis $A^\Omega$ with respect to a mother wavelet as in (\ref{eq:minmu}) the $SE(2)$-Bargmann transform. With Theorem \ref{theo:Bargmann} in Section \ref{sec:CR}, we will show that such a transform can be obtained as a restriction of the ordinary Bargmann transform to circles in the cotangent variables. We also recall that Fock-Bargmann spaces in $\C^n$, of poly-analytic functions, constitute the subject of recent research such as \cite{Abreu, AbreuGroch}.

An application of the theory presented, that actually motivated the whole construction, regards the behaviour of brain visual cortex when subject to oriented stimuli. A geometric model that is capable to reproduce experimental results of cortical activities for a classical experimental setting was given in \cite{BCSS}, and its main features can be properly interpreted in terms of the $SE(2)$-Bargmann transform.

\noindent {\bf Acknowledgements} Much of the motivations and hints for the present work arose in discussions with professor A. Sarti, that the authors would like to acknowledge as an important source of inspiration. Also the authors would like to thank professors F. Ricci and E. De Vito for helpful discussions and suggestions.

\section{Hilbert spaces and measure decomposition}\label{sec:Hilbert}

In this section we will define the Hilbert spaces $\Hil^\Omega$ and $\Hil_\Omega$ that will allow to define the Hilbert space $\Hil_\Omega(SE(2))$ where the continuous wavelet transform maps $L^2(S^1)$ isometrically. In short, $\Hil^\Omega$ is a space of distributions on the plane that is isomorphic to $L^2(S^1)$, while $\Hil_\Omega$ is its pullback through the Fourier transform.

We will choose as (unitary) Fourier transform $\F :L^2(\R^2) \to L^2(\hat{\R}^2)$
\begin{displaymath}
\F f (k) = \hat{f}(k) = \frac{1}{2\pi}\int_{\R^2} e^{-ik \cdot x} f(x) dx
\end{displaymath}
while we will use the right-antihermitian convention for both $L^2$ scalar products and tempered distributions $\Sw'$ on Schwartz class functions $\Sw$.

In order to motivate the definitions of $\Hil^\Omega$ and $\Hil_\Omega$, let us consider the left quasi-regular representation of $SE(2)$ on $f \in L^2(\R^2)$:
\begin{equation}\label{eq:leftregularrep}
\left(L(g)f\right)(x) = f(g^{-1}\,x)
\end{equation}
where $g=(q,\theta) \in SE(2)$ and $g^{-1}\,x = r_{-\theta}(x - q)$.
By Fourier transform we obtain a representation on $L^2(\hat\R^2)$ unitarily equivalent to (\ref{eq:leftregularrep}):
\begin{equation}\label{eq:fourierlrr}
\big(\hat L(g) \hat f\big)(k) = e^{-ik\cdot q} \hat f(r_{-\theta}k)
\end{equation}
where $\hat L(g) \doteq \F L(g) \F^{-1}$, indeed
\begin{eqnarray*}
\left(\F L(g) f\right)(k) & = & \frac{1}{2\pi}\int e^{-ik\cdot x}f(g^{-1}\,x)dx = \frac{1}{2\pi}\int e^{-ik\cdot g\,x}f(x)dx\\
& = & e^{-ik\cdot q} \left(\F f\right)(r_{-\theta}k) .
\end{eqnarray*}

By (\ref{eq:fourierlrr}) we see that $\hat L(g)$ acts in an invariant way on each circle of the domain of $\hat{f}$, so it can be reduced by considering its action on functions restricted to a circle of the Fourier domain. This is generally not allowed for elements of $L^2(\hat\R^2)$, and should be properly intended in terms of the direct integral decomposition associated to the usual polar coordinates
\begin{equation}\label{eq:dirint}
L^2(\hat{\R}^2) \approx \dirint{\R^+} \Hil^\Omega \,\Omega d\Omega
\end{equation}
where each $\Hil^\Omega$, that will be rigorously defined in a while, is isomorphic to $L^2(S^1)$, and $\Omega \in \R^+$ stands for the radius of the corresponding circle.

This approach to the decomposition of the quasi-regular representation of the $SE(2)$ group into irreducible ones was explicitly discussed already in \cite{V}, and provides a very special case of measure decomposition (see e.g. \cite[Prop. 3.29]{Fuhr}). It is particularly useful in this case since it permits to emphasize that one should be rather careful when coming back from Fourier to spatial variables, since the localization (on circles) in the Fourier domain implies a delocalization in the spatial domain. This passage indeed leads to the loss of square integrability, and we will see in next section that this is the same phenomenon that happens to the continuous wavelet transform. However, since $\hat L$ is reducible on $L^2(\hat\R^2)$, so it is $L$ on $L^2(\R^2)$ and we will denote the irreducible Hilbert spaces in the spatial domain with $\Hil_\Omega$.

\subsection{The Hilbert spaces $\Hil^\Omega$ and $\Hil_\Omega$}\label{sec:HilOmega}

For $\Omega \in \R^+$ and $u \in L^2(S^1)$, define the functional $\hat{T}^\Omega_{u}$ on $\Sw(\hat\R^2)$ by
$$
\langle \hat{T}^\Omega_u , \psi \rangle_{\Sw'\,\Sw} \doteq \int_0^{2\pi} u(\varphi) \overline{\psi(\Omega\cos\varphi, \Omega\sin\varphi)}d\varphi \quad \forall \ \psi \in \Sw(\hat \R^2) .
$$
$\hat{T}^\Omega_{u}$ is a continuous (anti) linear functional on $\Sw(\hat\R^2)$ since by the Cauchy-Schwartz inequality
\begin{align*}
|\langle \hat{T}^\Omega_u , \psi \rangle_{\Sw'\,\Sw}| & \leq \|u\|_{L^2(S^1)} \left(\int_0^{2\pi} |\psi(\Omega\cos\varphi, \Omega\sin\varphi)|^2d\varphi\right)^\frac12\\
& \leq (2\pi)^\frac12 \|u\|_{L^2(S^1)} \sup |\psi|
\end{align*}
which implies continuity (see e.g. \cite{Rudin}). Moreover, the map $u \mapsto \hat{T}^\Omega_{u}$ is injective since if $v \in L^2(S^1)$
$$
\int_0^{2\pi} v(\varphi) \phi(\varphi) d\phi = 0 \ \ \forall \phi \in \mathcal{C}_c(S^1) \quad \Rightarrow \quad v = 0 \ \ \textnormal{a.e. in} \ S^1.
$$

\begin{defi}\label{def:hilspaces}
For any $\Omega \in \R^+$, define the Hilbert spaces of distributions
\begin{align*}
\Hil^\Omega & \doteq \{\hat{T}^\Omega_{u} : u \in L^2(S^1)\} \subset \Sw'(\hat\R^2)\\
\Hil_\Omega & \doteq \F^{-1}\Hil^\Omega \subset \Sw'(\R^2)
\end{align*}
endowed with the scalar product
$$
\langle \F^{-1} \hat{T}^\Omega_{u}, \F^{-1} \hat{T}^\Omega_{v}\rangle_{\Hil_\Omega} = \langle \hat{T}^\Omega_{u}, \hat{T}^\Omega_{v}\rangle_{\Hil^\Omega} \doteq \langle u,v\rangle_{L^2(S^1)} .
$$
\end{defi}

%

The space $\Hil^\Omega$ results by considering functions on the circle as restrictions of functions on the plane.
Given a function $\hat{f} \in \Sw(\hat \R^2)$, for any fixed $\Omega \in \R^+$ denote with $\hat{f}^\Omega \in \Sw(S^1)$ its restriction on the circle of radius $\Omega$ as
\begin{equation}\label{eq:correspondence}
\quad\hat{f}^\Omega(\varphi) = \hat{f}(\Omega\cos\varphi, \Omega\sin\varphi) .
\end{equation}
Since $\hat{f}^\Omega \in L^2(S^1)$, define the linear operator $\Proj^\Omega: \Sw(\hat\R^2)\rightarrow \Hil^\Omega$
$$
\Proj^\Omega \hat{f} \doteq \hat{T}^\Omega_{\hat{f}^\Omega } \ .
$$
The space $\Hil^\Omega$ corresponds then to the closure of $\Proj^\Omega(\Sw(\hat\R^2))$ in the $\Hil^\Omega$ norm. Moreover, by the usual polar coordinates change of variables
\begin{equation}\label{eq:Fubini}
\|\hat f\|_{L^2(\hat\R^2)}^2 = \int_{\R^+} \int_{S^1} |\hat{f}^\Omega(\varphi)|^2 \Omega d\Omega
\end{equation}
so that, by Fubini theorem, (\ref{eq:correspondence}) provides an $L^2(S^1)$ function for a.e. $\Omega \in \R^+$ whenever $\hat{f} \in L^2(\hat\R^2)$. 
This gives the integral decomposition (\ref{eq:dirint}) and, since $\hat{f}^\Omega$ allows to define a tempered distribution, the operator $\Proj^\Omega$ can be extended to $L^2(\hat\R^2)$ for a.e. $\Omega \in \R^+$.
We observe also that $\Proj^\Omega$ can be expressed in terms of the group Fourier transform. In order to do this, recall the following \cite[Chapt. 4, Prop. 3.4]{Sugiura}.
\begin{lem}[Group Fourier transform]
Let $f \in \Sw(\R^2 \times S^1)$ and $\Omega \in \R^+$. Then the $SE(2)$ Fourier transform of $f$
\begin{displaymath}
\FG f (\Omega) = \int_{\R^2 \times S^1} dq d\theta f(q,\theta) \Pi^\Omega(q,\theta)
\end{displaymath}
is the (compact) integral operator on $L^2(S^1)$
$$
\FG f (\Omega) u(\varphi) = \int_{S^1} d\theta \hat{f}^{\Omega}(\varphi,\varphi - \theta) u(\theta)
$$
where we have used notation (\ref{eq:correspondence}), and the Fourier transform is performed with respect to spatial variables.
\end{lem}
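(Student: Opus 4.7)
The strategy is a direct unfolding of the operator-valued integral against a test vector $u \in L^2(S^1)$, followed by one change of variable on the circle and an observation about smoothness of the kernel to obtain compactness.

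First, one writes, for $u \in L^2(S^1)$,
$$
\FG f(\Omega) u(\varphi) \;=\; \int_{\R^2 \times S^1} f(q,\theta)\,\bigl(\Pi^\Omega(q,\theta)u\bigr)(\varphi)\,dq\,d\theta,
$$
which is legitimate because $f \in \Sw(\R^2 \times S^1)$ and each $\Pi^\Omega(q,\theta)$ is unitary on $L^2(S^1)$; this also reduces the meaning of the Bochner-type operator integral to an ordinary Lebesgue integral. Inserting the explicit form (\ref{eq:S1representation}),
$$
\bigl(\Pi^\Omega(q,\theta)u\bigr)(\varphi) \;=\; e^{-i\Omega(q_1\cos\varphi + q_2\sin\varphi)}\,u(\varphi - \theta),
$$
Fubini (Schwartz decay in $q$) lets one perform the $q$-integral first, recognising it, up to the constant fixed by the chosen normalisation of $\F$, as the spatial Fourier transform $\hat f(\cdot,\theta)$ evaluated at the point $k=(\Omega\cos\varphi,\Omega\sin\varphi)$, i.e.\ as $\hat f^\Omega(\varphi,\theta)$ in the notation of (\ref{eq:correspondence}) applied in the spatial variables only.

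Next, change the variable $\theta \mapsto \varphi - \theta$ in the remaining $S^1$ integral. Since $S^1$ is a compact group, the Haar measure $d\theta$ is invariant under this translation; this relabels $u(\varphi-\theta)$ into $u(\theta)$ and the second slot of $\hat f^\Omega$ into $\varphi-\theta$, yielding the claimed formula
$$
\FG f(\Omega) u(\varphi) \;=\; \int_{S^1} \hat f^\Omega(\varphi,\varphi - \theta)\,u(\theta)\,d\theta.
$$
Finally, the kernel $K(\varphi,\theta) := \hat f^\Omega(\varphi,\varphi-\theta)$ is smooth on the compact torus $S^1 \times S^1$ because $f \in \Sw(\R^2\times S^1)$ implies $\hat f$ is smooth in the spatial frequency variable; hence $K \in L^2(S^1\times S^1)$, so the integral operator is Hilbert--Schmidt and in particular compact on $L^2(S^1)$.

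I do not expect any real obstacle: the argument is essentially bookkeeping. The two points worth being careful about are (i) justifying the interchange of the operator integral with evaluation at $u$, which we resolve by testing against $u$ at the outset and appealing to Schwartz decay, and (ii) the factor coming from the $\frac{1}{2\pi}$ convention fixed for $\F$ at the beginning of Section \ref{sec:Hilbert}, which is a normalisation constant that can be absorbed in the definition of $\hat f^\Omega$ and does not affect the structural content of the statement.
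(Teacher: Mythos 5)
Your proposal is correct and follows essentially the same route as the paper: unfold the operator integral against $u$, insert the explicit form of $\Pi^\Omega(q,\theta)$, and perform the $q$-integral to recognise $\hat f^\Omega(\varphi,\theta)$, which is exactly the paper's three-line computation. Your two additions are actually refinements the paper leaves tacit: the change of variable $\theta \mapsto \varphi - \theta$ (the paper stops at $\int_{S^1} \hat f^\Omega(\varphi,\theta)\,u(\varphi-\theta)\,d\theta$ and leaves the relabelling implicit) and the Hilbert--Schmidt argument, which supplies the compactness asserted but not proved in the paper, as well as the remark about the $\frac{1}{2\pi}$ normalisation, which the paper's computation silently absorbs.
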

Since $S^1$ has finite measure, we can perform the group Fourier transform on functions $f \in \Sw(\R^2)$, which reads
$$
\FG f (\Omega) u(\varphi) = \hat{f}^{\Omega}(\varphi) \int_{S^1} u(\theta) d\theta
$$
so that the operator $\Proj^\Omega$ can be obtained from
$$
\langle \Proj^\Omega \hat{f} , \psi \rangle_{\Sw'\,\Sw} = \frac{1}{\int_{S^1} u(\theta) d\theta} \langle \FG f (\Omega) u , \psi\rangle_{L^2(S^1)}
$$
which holds for any $u \in L^2(S^1)$.

The Hilbert space $\Hil_\Omega$ can be characterized in the following way.
\begin{prop}\label{prop:PROJ}
For $\Omega \in \R^+$ let us denote with $\Proj_\Omega : \Sw(\R^2) \rightarrow \Hil_\Omega$ the linear operator $\Proj_\Omega \doteq \F^{-1} \Proj^\Omega \F$. Then $\Proj_\Omega(\Sw(\R^2))$ is dense in $\Hil_\Omega$ and for any $f \in \Sw(\R^2)$ it holds
\begin{align*}
i) & \quad \Proj_\Omega f(x) = \frac{1}{(2\pi)^2}f \ast j_0(\Omega |\cdot|)(x) = \int_{\R^2} f(y) \, \frac{j_0(\Omega |x - y|)}{(2\pi)^2}dy\\
ii) & \quad \sn{\Proj_\Omega f} = \left(\int_{\R^2} dx \int_{\R^2}dy \, f(x) \overline{f(y)} \, \frac{j_0(\Omega |x - y|)}{(2\pi)^2}\right)^{\frac12}
\end{align*}
where $j_0$ is the order zero modified Bessel function of the first kind \cite{V}
$$
j_0(s) = \int_0^{2\pi} e^{i s \cos\varphi} d\varphi\ .
$$
Moreover, for every $f \in L^2(\R^2)$ point $i)$ provides a tempered distribution and point $ii)$ is finite for a.e. $\Omega \in \R^+$.
\end{prop}
\begin{proof}
The density of $\Proj_\Omega(\Sw(\R^2))$ in $\Hil_\Omega$ is equivalent to the density of $\Proj^\Omega(\Sw(\hat\R^2))$ in $\Hil^\Omega$.
Also, by the definition of $\Proj_\Omega$, point $i)$ reads equivalently
$$
\Proj^\Omega \hat f= \frac{1}{(2\pi)^2} \F \left(f \ast j_0(\Omega |\cdot|)\right) =\frac{1}{2\pi} \hat f(k) \left(\F j_0(\Omega |\cdot|)\right)\
$$
and in order to prove this we only need to show that $\F j_0(\Omega |\cdot|) = 2\pi\hat{T}^\Omega_1$.
This is true since
\begin{eqnarray*}
\langle j_0(\Omega |\cdot|),\psi \rangle_{\Sw'\Sw} & = & \int_{\R^2}dx \int_{0}^{2\pi} d\varphi \, e^{i\Omega|x|\cos\varphi}\overline{\psi(x)}\\
& = & 2\pi \int_{0}^{2\pi} d\varphi \, \overline{\F \psi (\Omega\cos\varphi,\Omega\sin\varphi)} = 2\pi \langle \hat{T}^\Omega_1,\F \psi \rangle_{\Sw'\Sw}\ .
\end{eqnarray*}
In order to prove point $ii)$, note that the $\Hil_\Omega$ norm of $\Proj_\Omega f$ coincides with the $\Hil^\Omega$ norm of $\hat{T}^\Omega_{\hat{f}^\Omega}$, which reads
$$
\|\hat{T}^\Omega_{\hat{f}^\Omega}\|^2_{\Hil^\Omega} = \|\hat{f}^\Omega \|^2_{L^2(S^1)} = \langle\Proj^\Omega \hat{f} , \hat{f}\rangle_{\Sw'(\hat\R^2),\Sw(\hat\R^2)} = \langle \Proj_\Omega f , f\rangle_{\Sw'(\R^2)\,\Sw(\R^2)}
$$
so that $ii)$ is a direct consequence of $i)$ and of Definition \ref{def:hilspaces}. The well posedness of $i)$ and $ii)$ for a.e. $\Omega \in \R^2$ when $f \in L^2(\R^2)$ are equivalent to the corresponding statements in the Fourier domain.
\end{proof}

\begin{rem}
The space $\Hil_\Omega$ can be equivalently obtained as the Gelfand-Raikov Hilbert space over $L^1(\R^2)$ associated to the function of positive type $\phi(x) = \frac{j_0(\Omega |x - y|)}{(2\pi)^2}$ (see e.g. \cite[\S 3.3]{Folland95}). Indeed, for each $f \in L^1(\R^2)$ point $ii)$ of Proposition \ref{prop:PROJ} is finite, and it provides a Hilbert seminorm on $L^1(\R^2)$. If we call $N_\Omega$ its null space over $L^1(\R^2)$, we obtain $\Hil_\Omega = \overline{L^1(\R^2)/N_\Omega}^{\Hil_\Omega}$.
\end{rem}




%

The direct integral decomposition (\ref{eq:dirint}) can be pulled back by Fourier transform, so that $L^2(\R^2)$ decomposes as
\begin{equation}\label{eq:dirint2}
L^2(\R^2) \approx \dirint{\R^+} \Hil_\Omega\,  \Omega d\Omega
\end{equation}
where each $\Hil_\Omega$ is invariant under the action of the left quasi-regular representation (see also \cite[\S 5]{V}). More precisely
\begin{prop}\label{prop:dirint}
Let $f \in L^2(\R^2)$. Then there exists a measurable family $\{f_\Omega\}_{\Omega \in \R^+}$, unique up to zero measure sets, such that each $f_\Omega$ belongs to $\Hil_\Omega$ and which realizes
\begin{displaymath}
\|f\|^2_{L^2(\R^2)} = \displaystyle{\int_0^\infty \|f_\Omega\|^2_{\Hil_\Omega} \, \Omega d\Omega} .
\end{displaymath}
Moreover, the following representation formula holds
\begin{displaymath}
f = \displaystyle{\int_0^\infty f_\Omega \, \Omega d\Omega}
\end{displaymath}
\end{prop}

\begin{proof}
The first identity is obtained by Plancherel theorem and (\ref{eq:Fubini}):
\begin{displaymath}
\|f\|^2_{L^2(\R^2)} = \int_{\hat\R^2} |\hat{f}(k)|^2 dk = \int_{\R^+} \|\hat{f}^\Omega\|^2_{L^2(S^1)} \, \Omega d\Omega = \int_{\R^+} \|\Proj_\Omega f\|_{\Hil_\Omega}^2 \, \Omega d\Omega .
\end{displaymath}
In order to prove the representation formula, let us first consider $f \in \Sw(\R^2)$. Then
\begin{eqnarray*}
f(x) & = & \int_{\R^2} d k \hat f(k) e^{ikx}   =
\int_{\R^2} d k \int_{\R^2} dy f(y)   e^{ik(x-y)} \\
& = & \int_0^\infty d\Omega\, \Omega\, \int_{\R^2} dy\, f(y) \int_0^{2\pi}d\varphi\, e^{i\Omega |x-y| \cos\varphi}\\
& = & \int_0^\infty d\Omega\, \Omega \int_{\R^2} dy f(y) j_0(\Omega|x-y|)= \int_0^\infty  \Proj_\Omega f(x) \, \Omega d\Omega.
\end{eqnarray*}
so the same holds for $f \in L^2(\R^2)$ provided the integral is intended in weak sense.
\end{proof}

\subsection{Isometry and reproducing formulas}\label{sec:reproducing}

The continuous wavelet transform implemented by the analysis (\ref{eq:analysis}) turns a function $\Phi$ of one $S^1$ variable into the function $A^\Omega \Phi$ of three $R^2 \times S^1$ variables, that belong to $L^2(S^1)$ in the angular variable but it is not square integrable with respect to spatial variables, but actually belongs to $\Hil_\Omega$. In this section we prove point $i)$ of Theorem \ref{theo:main} making use of this argument, and see how the distributional approach allows to obtain a weak reconstruction formula.

For $\Omega \in \R^+$ we will denote with $\Hil_\Omega(SE(2))$ the Hilbert space $L^2(S^1,\Hil_\Omega)$
endowed with the scalar product
\begin{displaymath}
\langle F, G\rangle_{\Hil_\Omega(SE(2))} = \int_{S^1} d\theta \spr{F(\cdot,\theta)}{G(\cdot,\theta)} .
\end{displaymath}
Note that, since each $\Hil_\Omega$ is invariant under the quasi-regular representation, then each $\Hil_\Omega(SE(2))$ is invariant under the left regular representation of $SE(2)$. Moreover, the natural isomorphism $L^2(\R^2 \times S^1) \approx L^2(S^1,L^2(\R^2))$ together with (\ref{eq:dirint2}) provides the direct integral decomposition
\begin{equation}\label{eq:dirint3}
L^2(SE(2)) \approx \int_{\R^+}^\oplus \Hil_\Omega(SE(2)) \Omega d\Omega
\end{equation}
so that the left regular representation decomposes accordingly. Note also that this is not a decomposition into irreducibles, which is given e.g. by \cite[Chapt. 4, Theorem 4.3]{Sugiura}. Note also that, by Proposition \ref{prop:PROJ}, for all $\Psi_1, \Psi_2 \in \Sw(SE(2))$ it holds
\begin{equation}\label{eq:spr}
\langle \Psi_1, \Psi_2 \rangle_{\Hil_\Omega(SE(2))} = \int_{S^1} \!\!\!d\theta \int_{\R^2} \!\!\!dq_1 \int_{\R^2} \!\!\!dq_2 \Psi_1(q_1,\theta) \overline{\Psi_2(q_2,\theta)} \frac{j_0(|q_2 - q_1|)}{4\pi^2}.
\end{equation}
Even if $\Hil_\Omega(SE(2))$ is not the Gelfand-Raikov Hilbert space on $SE(2)$ generated by a function of positive type over the group algebra $L^1(SE(2))$ (see e.g. \cite[\S 3.3]{Folland95}), it may be worth noting that a similar construction can be drawn in the following way. Let $\phi$ be the bounded $\mathcal{C}^\infty(SE(2))$ function
$$
\phi(g) \doteq \langle \Pi^\Omega(g) c, c\rangle_{L^2(S^1)} = \frac{1}{(2\pi)^2} j_0(\Omega|q|) \ , \quad g = (q,\theta) \in SE(2)
$$
where $c = \frac{1}{2\pi}$ is a constant function on $S^1$, and let $D \in \Sw'(SE(2))$ be the distribution defined by
$$
\langle D, \overline{\Psi}\rangle_{\Sw'\Sw} = \int_{\R^2} \Psi(q,0) dq \quad \forall \ \Psi \in \Sw(SE(2)) .
$$
Then the $\Hil_\Omega(SE(2))$ scalar product (\ref{eq:spr}) can be written as
$$
\langle \Psi_1, \Psi_2 \rangle_{\Hil_\Omega(SE(2))} = \langle D , \overline{(\Psi_1^* \ast \Psi_2)\phi}\rangle_{\Sw' \Sw}
$$
for all $\Psi_1, \Psi_2 \in \Sw(SE(2))$, where $\Psi^*(g) \doteq \overline{\Psi(g^{-1})}$ and $\ast$ is the group convolution.


\begin{proof}[Proof of Theorem \ref{theo:main}, i)]\ \\
We prove that for any $u_0 \in L^2(S^1)$ the family $\left\{\Pi^\Omega(g)u_0\right\}_{g \in SE(2)}$ satisfies the following Parseval-type identity between $L^2(S^1)$ and $\Hil_\Omega(SE(2))$
\begin{equation}\label{eq:Parseval}
\|A^\Omega\Phi\|_{\Hil_\Omega(SE(2))} = \|u_0\|_{L^2(S^1)} \|\Phi\|_{L^2(S^1)} \quad \forall \ \Phi \in L^2(S^1)\ .
\end{equation}
In order to see this, it suffices to note that
\begin{align*}
\langle A^\Omega\Phi(\cdot,\theta) , \psi &\rangle_{\Sw'(\R^2)\Sw(\R^2)}\\
& = \int_{\R^2} dq \int_{S^1} d\varphi \, \Phi(\varphi) e^{i\Omega(q_1\cos\varphi + q_2\sin\varphi)} \overline{u_0(\varphi - \theta)} \overline{\psi(q)}\\
& = \int_{S^1} d\varphi \, \Phi(\varphi) \overline{u_0(\varphi - \theta)\hat{\psi}(\Omega\cos\varphi,\Omega\sin\varphi)}\\
& = \langle \hat{T}^\Omega_{\Phi\,\overline{u_0(\cdot\, - \theta)}} , \F\psi \rangle_{\Sw'(\hat{\R}^2)\Sw(\hat{\R}^2)} .
\end{align*}
From here we immediately deduce
\begin{align*}
\int_{S^1} d\theta \|A^\Omega\Phi(\cdot,\theta)\|^2_{\Hil_\Omega} & = \int_{S^1} d\theta \int_{S^1} d\varphi |u_0(\varphi - \theta) \Phi(\varphi)|^2\\
& = \|u_0\|^2_{L^2(S^1)} \|\Phi\|^2_{L^2(S^1)} . \qedhere
\end{align*}
\end{proof}

For any nonzero $u_0 \in L^2(S^1)$, the function
$$
SE(2) \times SE(2) \ni (g,h) \mapsto K^\Omega(g,h) = \langle \Pi^\Omega(h) u_0, \Pi^\Omega(g) u_0 \rangle_{L^2(S^1)}
$$
is a positive definite kernel, or Mercer kernel, on $SE(2)$, in the sense that for any sequence $\{g_i\}_{i = 1 \dots n} \subset SE(2)$ and any sequence $\{a_i\}_{i = 1 \dots n} \subset \C$ it holds
$$
\sum_{i=1}^n \sum_{j=1}^n a_i \overline{a_j} K(g_j,g_i) = \left\| \sum_{i=1}^n a_i \Pi^\Omega(g_i) u_0 \right\|_{L^2(S^1)}^2 \geq 0 .
$$

$K^\Omega$ defines then a unique reproducing kernel Hilbert spaces (see e.g. \cite{Aronszajn} or \cite[Chapt. 4]{SC}), that is the target space of the operator $A^\Omega$, which reads
$$
\Hil_{K^\Omega} = \{F \in \C^{SE(2)} \,|\, \exists v \in L^2(S^1) \ \textnormal{such that} \ F = \langle \Pi^\Omega(\cdot)u_0, v\rangle_{L^2(S^1)}\}
$$
and is endowed with the norm 
$$
\|F\|_{\Hil_{K^\Omega}} = \inf \{\|v\|_{L^2(S^1)} : v \in L^2(S^1), F = \langle \Pi^\Omega(\cdot)u_0, v\rangle_{L^2(S^1)}\} .
$$

By (\ref{eq:Parseval}) we have that $\Hil_{K^\Omega}$ is a closed Hilbert subspace of $\Hil_\Omega(SE(2))$, whose norm provides an explicit expression for $\| \cdot \|_{\Hil_{K^\Omega}}$. Indeed, for $g \in SE(2)$ call $u_g(\varphi) \doteq \Pi^\Omega(g)u_0(\varphi)$ and $K^\Omega_g(h) \doteq K^\Omega(g,h)$. Then
$$
K^\Omega_g(h) = A^\Omega u_g (h)
$$
and, if $u_0$ has unit norm, by polarization of (\ref{eq:Parseval}) we have
$$
\langle K^\Omega_g , A^\Omega \Phi \rangle_{\Hil_\Omega(SE(2))} = \langle u_g , \Phi\rangle_{L^2(S^1)} = A^\Omega \Phi(g)
$$
for all $\Phi$ in $L^2(S^1)$. By uniqueness of the reproducing kernel Hilbert space associated to a Mercer kernel one then has that $\| F \|_{\Hil_{K^\Omega}} = \| F \|_{\Hil_{\Omega}(SE(2)}$ for all $F \in \Hil_{K^\Omega}$. The abstract setting for this approach was settled in \cite{Saitoh}, where the author makes use of the hypothesis that $u_0$ is a cyclic vector, which is always the case for irreducible representations.

As for the standard square integrable cases, the isometry associated to (\ref{eq:Parseval}) allows also a reconstruction formula. In this case, however, it differs from (\ref{eq:reproducing0}) in that it is not a standard linear superposition. What can be obtained is indeed the reconstruction of the linear functional associated to the analyzed vector.

\begin{prop}
Let $\Phi\in L^2(S^1)$ and $A^\Omega \Phi$ as in (\ref{eq:analysis}), with $u_0$ of unit norm in $L^2(S^1)$. Then the following identity holds in distributional sense
$$
\F_{SE(2)} \overline{A^\Omega \Phi} (\cdot)u_0 = \hat T^\Omega_\Phi
$$
which means that for all $\psi \in \Sw(\hat{\R}^2)$
\begin{displaymath}
\langle \F_{SE(2)} \overline{A^\Omega \Phi} (\cdot)u_0, \psi\rangle_{\Sw'\Sw} = \langle\Phi,\psi^\Omega\rangle_{L^2(S^1)}\ .
\end{displaymath}
\end{prop}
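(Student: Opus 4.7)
The plan is to test the asserted distributional identity against an arbitrary $\psi\in\Sw(\hat\R^2)$ and to identify the resulting pairing as a polarization of the Parseval-type identity in Proposition~\ref{prop:tight}. Unwinding the definitions of $\hat T^\Omega$ and of the group Fourier transform, the pairing of the left-hand side with $\psi$ collapses, after one harmless swap of the compact $\theta$ and $\varphi$ integrations, into
$$
\int_{SE(2)}\overline{A^\Omega\Phi(q,\theta)}\,A^\Omega\psi^\Omega(q,\theta)\,dq\,d\theta,
$$
where the inner $\varphi$-integral has been recognized as the analysis of $\psi^\Omega$, since $[\Pi^\Omega(q,\theta)u_0](\varphi)$ is precisely the kernel appearing in $A^\Omega\psi^\Omega(q,\theta)$.

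Next, I would read this integral as the $\Hil_\Omega(SE(2))$ scalar product $\langle A^\Omega\Phi,A^\Omega\psi^\Omega\rangle_{\Hil_\Omega(SE(2))}$. By polarization of Proposition~\ref{prop:tight} and $\|u_0\|_{L^2(S^1)}=1$, this scalar product equals $\langle\Phi,\psi^\Omega\rangle_{L^2(S^1)}$, which is the right-hand side of the claim. The polarization reuses the Fourier-side computation already appearing in the proof of Proposition~\ref{prop:tight}, namely $\F A^\Omega\Phi(\cdot,\theta)=\hat T^\Omega_{\Phi\overline{u_0(\cdot-\theta)}}$ and its analogue with $\psi^\Omega$ in place of $\Phi$; Plancherel in $q$ in the distributional sense granted by Lemma~\ref{lem:extension}, followed by Fubini against $|u_0(\varphi-\theta)|^2$ on the compact $S^1_\theta\times S^1_\varphi$, then collapses everything to $\langle\Phi,\psi^\Omega\rangle_{L^2(S^1)}$.

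The main obstacle, and really the only delicate point, is the interpretation of the formal $dq\,d\theta$ integral above as a scalar product in $\Hil_\Omega(SE(2))$: neither factor belongs to $L^2(\R^2)$ in the $q$ variable, because the Fourier transform of each is a distribution supported on the circle of radius $\Omega$, so the $dq$ integral is not absolutely convergent. Transferring the pairing to the Fourier side via the isometry $\Hil_\Omega\cong\Hil^\Omega$ and reducing everything to integrals on $S^1$ is precisely the non-square-integrability workaround developed in Section~\ref{sec:Hilbert}, so no new machinery is needed beyond reusing that construction.
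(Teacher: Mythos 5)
Your proof is correct, and it reorganizes the paper's argument rather than reproducing it. The paper works directly at the level of distributions: it first records, by the computation from Proposition \ref{prop:tight}, that the fiberwise spatial Fourier transform is $\F A^\Omega\Phi(\cdot,\theta)=\overline{u_0(\cdot-\theta)}\,\Phi\,\hat T^\Omega_1$, i.e. (\ref{eq:FF}), and then averages this over $\theta$ against $u_0(\varphi-\theta)$, so that $\int_{S^1}|u_0(\varphi-\theta)|^2\,d\theta=\|u_0\|^2_{L^2(S^1)}=1$ directly reconstitutes $\hat T^\Omega_\Phi$; no test function is ever introduced, and the $q$-integral is distributional from the outset. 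You instead dualize: you pair with $\psi\in\Sw(\hat\R^2)$, recognize the inner $\varphi$-integral as the analysis of $\psi^\Omega$, and reduce the claim to the polarized form of Proposition \ref{prop:tight}, namely $\langle A^\Omega\Phi,A^\Omega\psi^\Omega\rangle_{\Hil_\Omega(SE(2))}=\langle\Phi,\psi^\Omega\rangle_{L^2(S^1)}$ --- the same polarization device the paper uses to prove part ii) of Theorem \ref{teo:bounded}. The two routes share the identical computational core, (\ref{eq:FF}) plus the unit-norm $\theta$-average, so neither is stronger; but each buys something. Your version exhibits the weak reconstruction as a tight-frame adjoint identity, a formal corollary of the Parseval property, while the paper's version constructs the distribution $\hat T^\Omega_\Phi$ explicitly and makes visible that the reconstruction is a $\theta$-averaged inversion of the fiberwise Fourier transform. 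You also correctly isolate the one analytic subtlety --- your intermediate display $\int_{SE(2)}\overline{A^\Omega\Phi}\,A^\Omega\psi^\Omega\,dq\,d\theta$ is purely formal since the $dq$ integral is not absolutely convergent --- and you resolve it exactly as the framework of Section \ref{sec:Hilbert} demands, transferring the pairing through the isometry $\Hil_\Omega\cong\Hil^\Omega$ of Lemma \ref{lem:extension} to absolutely convergent integrals over $S^1_\theta\times S^1_\varphi$, where Fubini against $|u_0(\varphi-\theta)|^2$ is legitimate. One cosmetic caveat: under the conjugation convention effectively used in the proof of Proposition \ref{prop:tight}, the inner $\varphi$-integral is $\overline{A^\Omega\psi^\Omega(q,\theta)}$ rather than $A^\Omega\psi^\Omega(q,\theta)$; since the paper is itself loose with these bars (its own final display drops the conjugate on $A^\Omega\Phi$), this does not affect the substance of your argument.
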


\begin{proof}
By computations analogous to those used for (\ref{eq:Parseval}), we have that the distributional Fourier transform of $A^\Omega \Phi(q,\theta)$ with respect to spatial variables reads
\begin{equation}\label{eq:FF}
\F A^\Omega \Phi(\cdot ,\theta) = \overline{u_0(\varphi - \theta)}\,\Phi(\varphi)\,\hat T_1^\Omega \quad \textrm{a.e.} \ \varphi \in S^1 .
\end{equation}
Now by the normalization of $u_0$ and since $\Phi\,\hat T_1^\Omega = \hat T_\Phi^\Omega$, then
\begin{align*}
\hat T_\Phi^\Omega
& = \int_0^{2\pi} d\theta\, u_0(\varphi - \theta) \F A^\Omega\Phi(k,\theta)\\
& = \int_0^{2\pi} d\theta \int_{\R^2} \frac{dq}{2\pi} e^{-i\kappa(q_1\cos\varphi + q_2\sin\varphi)}u_0(\varphi - \theta) A^\Omega\Phi(q,\theta)\ . \qedhere
\end{align*}

\end{proof}

\section{The uncertainty principle and CR functions}\label{sec:CR}

In this section we provide a characterization of the target space of the continuous wavelet transform in the case of a mother wavelet that is a minimum of the uncertainty principle for the $SE(2)$ group. In this case the reproducing kernel Hilbert space of $A^\Omega$ is the subspace of $\Hil_\Omega(SE(2))$ of functions which are complex regular with respect to the natural $CR$ structure of $SE(2)$. This statement is the analogous of the well-known result of Bargmann \cite{BargmannII}, since it is stated in terms of an integrability and complex regularity condition, and for this reason we will denote the resulting analysis the $SE(2)$-Bargmann transform.

\subsection{CR structures in $\R^2 \times S^1$}

The manifold $\R^2 \times S^1$ can be considered as the real submanifold of $\C^2$ given explicitly in (\ref{eq:realsubm}), with the induced $CR$ structure. This is a linear operator $J$ that acts on the Lie algebra generators (\ref{generators}) as
\begin{equation}\label{eq:almostcomplex}
J(X_2) = \lambda X_1 \ , \quad J(X_1) = - \frac{1}{\lambda} X_2
\end{equation}
where the constant $\lambda$ arise from a plain rescaling of the spatial variables $q$.
Indeed, the ordinary complex structure of $\C^2$ with respect to $z_j = p_j + i\frac{q_j}{\lambda}$
\begin{displaymath}
J(\partial_{p_j}) = \lambda\partial_{q_j} \ , \quad J(\partial_{q_j}) = - \frac{1}{\lambda}\partial_{p_j}
\end{displaymath}
when restricted to (\ref{eq:realsubm}) reduces to (\ref{eq:almostcomplex}).

The antiholomorphic, or $CR$, vector field for the $CR$ structure (\ref{eq:almostcomplex}) is $Z = X_2 + i \lambda X_1$, since
\begin{displaymath}
J(Z) = - i Z
\end{displaymath}
so we are lead to the following definition.
\begin{defi}\label{def:CRl}
We say that $F : \R^2_q \times S^1_\theta \rightarrow \C$ is $CR^\lambda(SE(2))$ if
\begin{equation}\label{eq:CRlambda}
\left(X_2 + i \lambda X_1\right) F = 0
\end{equation}
where $X_1$ and $X_2$ are the left invariant differential operators given by (\ref{generators}).\\
\end{defi}

\subsection{The uncertainty principle}

In \cite[Th. 2.4]{FS} it is stated a general form of the Uncertainty Principle for connected Lie groups. In our situation the statement reduces to

\begin{theo}[Uncertainty principle]
Let $X_1$ and $X_2$ be the left invariant differential operators given by (\ref{generators}) and let $d\Pi^\Omega$ the Lie algebra representation corresponding to (\ref{eq:S1representation}). Then
$$
\|d\Pi^\Omega(X_1)u\|_{L^2(S^1)} \, \|d\Pi^\Omega(X_2)u\|_{L^2(S^1)} \geq \frac12 |\langle d\Pi^\Omega([X_1,X_2]) u, u\rangle_{L^2(S^1)}|
$$
for all $u \in L^2(S^1)$. The inequality becomes an equality if $u$ satisfies
$$
\left(d\Pi^\Omega(X_1) - i \lambda d\Pi^\Omega(X_2) \right) u = 0
$$
for some $\lambda \in \R$, and we call a solution to this equation a minimal uncertainty state.
\end{theo}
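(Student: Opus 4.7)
My plan is to invoke the standard Robertson form of the uncertainty relation, adapted to the skew-adjoint generators of a unitary representation. Writing $A_j := d\Pi^\Omega(X_j)$, the two structural facts I need are that $\Pi^\Omega$ is unitary, so each $A_j$ is skew-adjoint ($A_j^* = -A_j$) on the dense invariant core of smooth vectors of $\Pi^\Omega$ (here $C^\infty(S^1)$), and that $d\Pi^\Omega$ is a Lie algebra homomorphism on that core, giving $[A_1,A_2] = d\Pi^\Omega([X_1,X_2])$. I would carry out the identity below on this core, and extend to general $u \in L^2(S^1)$ by reading the inequality as trivial whenever $\|A_j u\|$ fails to be finite.

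The computational heart of the proof is just the two-line factorization obtained from skew-adjointness:
\[
\langle [A_1,A_2] u, u\rangle = \langle A_1 u, A_2 u\rangle - \langle A_2 u, A_1 u\rangle = 2i\,\mathrm{Im}\,\langle A_1 u, A_2 u\rangle.
\]
Taking moduli and chaining the trivial bound $|\mathrm{Im}\,z|\leq |z|$ with the Cauchy--Schwarz inequality then yields
\[
|\langle [A_1,A_2] u, u\rangle| \;\leq\; 2|\langle A_1 u, A_2 u\rangle| \;\leq\; 2\,\|A_1 u\|\,\|A_2 u\|,
\]
which is exactly the stated uncertainty inequality once one recalls $[A_1,A_2] = d\Pi^\Omega([X_1,X_2])$.

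For the equality case, I would read off the two saturation conditions separately. Cauchy--Schwarz is saturated exactly when $A_1 u$ and $A_2 u$ are proportional, say $A_1 u = c\, A_2 u$ for some $c \in \C$ (the case $A_2 u = 0$ is degenerate and reduces to $A_1 u = 0$). Under this proportionality, $\langle A_1 u, A_2 u\rangle = c\,\|A_2 u\|^2$, and the bound $|\mathrm{Im}\,z|\leq |z|$ becomes tight precisely when this quantity is purely imaginary, which forces $c = i\lambda$ with $\lambda \in \R$. Substituting back gives
\[
\bigl(d\Pi^\Omega(X_1) - i\lambda\, d\Pi^\Omega(X_2)\bigr) u = 0,
\]
which is the minimal uncertainty equation as stated.

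The only real technical point, and the place I would expect to have to be careful, is the unboundedness of $A_1$ and $A_2$: the commutator identity, the skew-adjoint integration by parts, and the appearance of $d\Pi^\Omega([X_1,X_2])$ all need to be performed on a common invariant dense core. For $\Pi^\Omega$ this core is $C^\infty(S^1)$ (on which the action of $A_1, A_2, [A_1,A_2]$ is given by explicit first-order differential operators computed from (\ref{eq:S1representation})), so the justification reduces to routine verifications and a density argument; no deeper analytic machinery is needed.
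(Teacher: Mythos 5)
Your proof is correct, but it takes a genuinely different route from the paper for the simple reason that the paper offers no proof at all: the authors invoke the general uncertainty principle for connected Lie groups from Folland--Sitaram \cite[Th. 2.4]{FS} and merely observe that it specializes to the stated inequality, deferring the computation of the concrete operators $d\Pi^\Omega(X_1) = i\Omega\sin\varphi$ and $d\Pi^\Omega(X_2) = \frac{d}{d\varphi}$ to the proof of Proposition \ref{prop:minunc}. Your Robertson-type argument --- skew-adjointness of $A_j = d\Pi^\Omega(X_j)$ under a unitary representation, the identity $\langle [A_1,A_2]u,u\rangle = 2i\,\mathrm{Im}\,\langle A_1u,A_2u\rangle$, then $|\mathrm{Im}\,z|\leq|z|$ plus Cauchy--Schwarz, with the saturation analysis forcing the proportionality constant to be $i\lambda$ with $\lambda\in\R$ --- is precisely the classical proof underlying the cited theorem, so in effect you have unpacked the black-box reference rather than diverged mathematically from it. What your route buys is a self-contained verification, and in fact slightly more than the statement claims: the theorem asserts only that the equation $(d\Pi^\Omega(X_1) - i\lambda\, d\Pi^\Omega(X_2))u = 0$ is \emph{sufficient} for equality, whereas your saturation analysis shows it is also necessary (modulo the degenerate case). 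What the citation buys the authors is brevity and placement in a general Lie-theoretic framework. Two refinements worth noting in this concrete setting: since $d\Pi^\Omega(X_1)$ and $d\Pi^\Omega([X_1,X_2])$ are \emph{bounded} multiplication operators (by $i\Omega\sin\varphi$ and by a multiple of $\cos\varphi$, respectively), the only unbounded operator in play is $\frac{d}{d\varphi}$ with domain $H^1(S^1)$, so your domain discussion simplifies considerably --- in particular the right-hand side is finite for every $u\in L^2(S^1)$, which makes the ``trivially true when $\|A_ju\|=\infty$'' convention legitimate; on the other hand, for $u\in H^1(S^1)\setminus C^\infty(S^1)$ both sides are finite, so extending from the smooth core is a genuine density-and-continuity step (both sides are continuous in the $H^1$ topology) rather than an instance of that triviality convention --- you do flag the density argument at the end, so this is a matter of wording, not a gap.
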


\begin{prop}\label{prop:minunc}
The equation for minimal uncertainty states of the $SE(2)$ group with respect to representation (\ref{eq:S1representation}) reads explicitly
\begin{equation}\label{eq:minunc}
\left(\frac{d}{d\varphi} + \lambda \Omega \sin\varphi \right) u (\varphi) = 0
\end{equation}
and we denote by $u^{\lambda,\Omega}$ its $L^2(S^1)$ normalized solution
\begin{displaymath}
u^{\lambda,\Omega} (\varphi) = \frac{1}{\sqrt{j_0(-2 i \lambda \Omega)}} e^{\lambda\Omega\cos\varphi}\ .
\end{displaymath}
\end{prop}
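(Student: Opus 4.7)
The plan is to derive the ODE \eqref{eq:minunc} by explicitly computing the infinitesimal generators $d\Pi^\Omega(X_1)$ and $d\Pi^\Omega(X_2)$, solve the resulting first-order linear ODE, and evaluate the normalization integral by identifying it with a value of $j_0$.

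First I would determine the one-parameter subgroups $\exp(tX_i)$ through the identity. Since $X_1|_{(0,0,0)} = \partial_{q_2}$ and $X_2|_{(0,0,0)} = \partial_\theta$, integrating the defining ODE $\dot{\gamma}(t)=X_i|_{\gamma(t)}$ (equivalently, using the one-parameter subgroup property with the group law $(q',\theta')\cdot(q,\theta)=(q'+r_{\theta'}q,\theta'+\theta)$) yields $\exp(tX_1)=(0,t,0)$ and $\exp(tX_2)=(0,0,t)$. Plugging these into \eqref{eq:S1representation} and differentiating at $t=0$ gives
\begin{equation*}
d\Pi^\Omega(X_1)u(\varphi) = -i\Omega\sin\varphi\,u(\varphi), \qquad d\Pi^\Omega(X_2)u(\varphi) = -u'(\varphi),
\end{equation*}
for $u$ sufficiently smooth.

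Next I would substitute these generators into the minimal-uncertainty condition $(d\Pi^\Omega(X_1)-i\lambda\,d\Pi^\Omega(X_2))u=0$, which reduces (up to a common factor of $i$ and the sign/reciprocal convention implicit in the parametrization of $\lambda$) to the stated ODE $\bigl(\tfrac{d}{d\varphi}+\lambda\Omega\sin\varphi\bigr)u(\varphi)=0$. This is a separable first-order linear ODE: writing $(\ln u)'=-\lambda\Omega\sin\varphi$ and integrating gives $u(\varphi)=C\,e^{\lambda\Omega\cos\varphi}$. Periodicity on $S^1$ is automatic because $\cos$ is $2\pi$-periodic, so the solution is a genuine element of $L^2(S^1)$.

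Finally I would compute the normalization constant. One has
\begin{equation*}
\|u\|_{L^2(S^1)}^{2} = |C|^{2}\int_{0}^{2\pi} e^{2\lambda\Omega\cos\varphi}\,d\varphi.
\end{equation*}
Setting $s=-2i\lambda\Omega$ in the Bessel integral $j_0(s)=\int_0^{2\pi} e^{is\cos\varphi}d\varphi$ gives $is\cos\varphi=2\lambda\Omega\cos\varphi$, so the integral equals $j_0(-2i\lambda\Omega)$, and imposing unit norm yields $|C|=1/\sqrt{j_0(-2i\lambda\Omega)}$.

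The only delicate point is bookkeeping of signs and of the convention for the skew-adjoint Lie algebra representation: one must make sure that the formulas for $\exp(tX_i)$ computed from the left-invariant fields \eqref{generators} match the intended sense in the minimal-uncertainty condition, so that the explicit ODE comes out in the form stated. Once that is settled, everything else is a direct integration and a recognition of the Bessel integral.
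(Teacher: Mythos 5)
Your proposal is correct and takes essentially the same route as the paper, whose entire proof is the remark that $d\Pi^\Omega(X_1)$ and $d\Pi^\Omega(X_2)$ can be computed directly (the paper asserts $i\Omega\sin\varphi$ and $\frac{d}{d\varphi}$, opposite in overall sign to your $-i\Omega\sin\varphi$ and $-\frac{d}{d\varphi}$, which are in fact the correct signs under the standard convention $d\Pi(X)=\frac{d}{dt}\big|_{t=0}\Pi(\exp(tX))$; the discrepancy is immaterial since an overall sign, together with the reparametrization $\lambda\mapsto-\lambda$ or $\lambda\mapsto-1/\lambda$ of the real parameter, leaves the family of minimal uncertainty equations unchanged, exactly the bookkeeping point you flag). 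You simply carry out explicitly what the paper leaves implicit --- the one-parameter subgroups $\exp(tX_1)=(0,t,0)$, $\exp(tX_2)=(0,0,t)$, the integration of the separable ODE, and the identification $\int_0^{2\pi}e^{2\lambda\Omega\cos\varphi}\,d\varphi=j_0(-2i\lambda\Omega)$ for the normalization --- all of which is correct.
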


We note in passing that the uncertainty principle can be stated in slightly more general terms, including mean values of the noncommuting operators. In that case, the equation for minimal uncertainty states becomes an eigenvalue equation. With respect to the present situation, the physical interpretation of minimal uncertainty states (\ref{eq:minunc}) is that of having zero average angular momentum \cite{CN}.

\begin{proof}[Proof of Proposition \ref{prop:minunc}]
It suffices to show that $d\Pi^\Omega(X_1)$ is the multiplicative operator $i \Omega \sin\varphi$ and  $d\Pi^\Omega(X_2)$ is the derivation $\displaystyle{\frac{d}{d\varphi}}$, that can be easily checked by direct computation.
\end{proof}

\begin{defi}
We denote $SE(2)$-Bargmann transform the continuous wavelet transform with respect to the family $\left\{\Pi^{\Omega}(q,\theta)u^{\lambda,\Omega}\right\}_{(q,\theta) \in SE(2)}$
\begin{equation}\label{eq:SE2Barg}
\B_{\Omega}^{\lambda}\Phi(q,\theta) \doteq \langle \Pi^{\Omega}(q,\theta) u^{\lambda,\Omega}, \Phi\rangle_{L^2(S^1)} \ , \quad \Phi \in L^2(S^1)\ .
\end{equation}
\end{defi}

The following general fact provides the relation between uncertainty principle and $CR$ functions, and actually motivates the previous definition.

\begin{lem}\label{lem:CR}
Let $\G$ be a Lie group and $\pi$ be a unitary representation of $\G$ on a Hilbert space $\Hil$. If $X$ is a left invariant vector field and $u_0$ is a vector of $\Hil$ in the domain of $d\pi(X)$ such that
$$
d\pi(X) u_0 = 0
$$
then for all $\Phi \in \Hil$
$$
X \langle \pi(g) u_0, \Phi\rangle_{\Hil} = 0 .
$$
\end{lem}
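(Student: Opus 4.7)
The plan is to interpret the left-invariant vector field $X$ as a derivative along the one-parameter subgroup $\{\exp(tX)\}_{t \in \R}$ acting on the right, and then pass this derivative through the inner product and through the representation using Stone's theorem.

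First I would recall that for any smooth function $f$ on $\G$, the left-invariant vector field $X$ acts as
\begin{displaymath}
Xf(g) = \frac{d}{dt}\Big|_{t=0} f(g\exp(tX)).
\end{displaymath}
Applying this to $f(g) = \langle \pi(g)u_0, \Phi\rangle_{\Hil}$ and using the homomorphism property $\pi(g\exp(tX)) = \pi(g)\pi(\exp(tX))$ yields
\begin{displaymath}
X\langle \pi(g)u_0,\Phi\rangle_{\Hil} = \frac{d}{dt}\Big|_{t=0} \langle \pi(g)\pi(\exp(tX))u_0,\Phi\rangle_{\Hil}.
\end{displaymath}

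Next I would invoke Stone's theorem: since $\pi$ is unitary, $t \mapsto \pi(\exp(tX))$ is a strongly continuous one-parameter unitary group whose infinitesimal generator is precisely $d\pi(X)$. Hence for $u_0$ in the domain of $d\pi(X)$,
\begin{displaymath}
\frac{d}{dt}\Big|_{t=0} \pi(\exp(tX))u_0 = d\pi(X)u_0 = 0
\end{displaymath}
by hypothesis. This identity holds in the strong topology of $\Hil$, so that boundedness of $\pi(g)$ and continuity of the inner product in its first argument allow me to bring the limit inside:
\begin{displaymath}
X\langle \pi(g)u_0,\Phi\rangle_{\Hil} = \langle \pi(g)\,d\pi(X)u_0,\Phi\rangle_{\Hil} = 0.
\end{displaymath}

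I do not expect any serious obstacle here; the only subtle point is making sure that the derivative can be commuted with $\pi(g)$ and with the inner product, which is immediate from the strong differentiability guaranteed by Stone's theorem together with the unitarity of $\pi(g)$. The conclusion then follows by taking $\Phi$ arbitrary, giving $X\langle \pi(\cdot)u_0,\Phi\rangle_{\Hil} \equiv 0$. Specializing to $\G = SE(2)$, $\pi = \Pi^\Omega$, $u_0 = u^{\lambda,\Omega}$ and $X = X_2 + i\lambda X_1$ will immediately identify the range of $\B_\Omega^\lambda$ with functions in $CR^\lambda(SE(2))$.
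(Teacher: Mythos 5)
Your proof is correct and follows essentially the same route as the paper: express $X$ as the derivative at $t=0$ along $g\exp(tX)$, use the homomorphism property to factor out $\pi(g)$, and identify the remaining derivative with $d\pi(X)u_0 = 0$. Your explicit appeal to Stone's theorem and to the continuity of $\pi(g)$ and the inner product simply spells out the limit exchange that the paper asserts ``by definition of algebra representation,'' so it is the same argument with the analytic step made more careful.
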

\begin{proof}
By definition of left invariant vector field, if $F$ is a smooth function on $\G$ then (see e.g. \cite[Th. 3.2.3]{Cohn})
$$
X F (g) = \frac{d}{dt}\Big|_{t = 0} F(g \cdot \exp(t X))
$$
so, since $\pi$ is a homomorphism
$$
X \langle \pi(g) u_0, \Phi\rangle_{\Hil} = \langle \pi(g) \frac{d}{dt}\Big|_{t = 0} \pi(\exp(t X)) u_0, \Phi\rangle_{\Hil}
$$
and by definition of algebra representation $\displaystyle{\frac{d}{dt}}\Big|_{t = 0} \pi(\exp(t X)) = d\pi(X)$.
\end{proof}

By Lemma (\ref{lem:CR}), since $(d\Pi^\Omega(X_2) - i \lambda d\Pi^\Omega(X_1)) u^{\lambda,\Omega} = 0$, then $\B_{\Omega}^{\lambda}\Phi$ satisfies equation (\ref{eq:CRlambda}). We then have the following corollary.
\begin{cor}\label{cor:CR}
Any function on $\R^2 \times S^1$ obtained as an $SE(2)$-Bargmann transform (\ref{eq:SE2Barg}) is $CR^\lambda(SE(2))$ in the sense of Definition \ref{def:CRl}.
\end{cor}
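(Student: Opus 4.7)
The plan is to verify that $\B_\Omega^\lambda\Phi$ is annihilated by the CR operator $X_2 + i\lambda X_1$ of Definition \ref{def:CRl}, exploiting Lemma \ref{lem:CR} together with the minimal-uncertainty condition on $u^{\lambda,\Omega}$ from Proposition \ref{prop:minunc}. First I would extract from the proof of Lemma \ref{lem:CR} not merely the vanishing conclusion but the stronger intermediate identity
\begin{equation*}
X\langle \pi(g) u_0, \Phi\rangle_{\Hil} = \langle \pi(g)\, d\pi(X) u_0, \Phi\rangle_{\Hil},
\end{equation*}
valid for every real left-invariant $X$. Applying this twice, with $\pi = \Pi^\Omega$, $u_0 = u^{\lambda,\Omega}$, and $X = X_1$ and $X = X_2$ in turn, a linear combination then expresses $(X_2 + i\lambda X_1)\B_\Omega^\lambda \Phi(g)$ as a single $L^2(S^1)$ scalar product in which $\Pi^\Omega(g)$ acts on a fixed element.

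The one point requiring care is the sign produced when pulling the scalar $i\lambda$ inside the inner product. The paper's right-antihermitian convention, visible in the computation preceding the proof of Proposition \ref{prop:tight}, makes the scalar product antilinear in its first slot, so $i\lambda\langle v, w\rangle = \langle -i\lambda v, w\rangle$. Consequently the combination appearing inside the scalar product is $(d\Pi^\Omega(X_2) - i\lambda\, d\Pi^\Omega(X_1)) u^{\lambda,\Omega}$, not the naively expected $(d\Pi^\Omega(X_2) + i\lambda\, d\Pi^\Omega(X_1)) u^{\lambda,\Omega}$. This is exactly the quantity that vanishes by Proposition \ref{prop:minunc}: once one substitutes $d\Pi^\Omega(X_2) = d/d\varphi$ and $d\Pi^\Omega(X_1) = i\Omega\sin\varphi$, it reduces to the first-order ODE $u' + \lambda\Omega\sin\varphi\,u = 0$ satisfied by $u^{\lambda,\Omega}$.

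I do not expect a substantial obstacle: the analytic content is already packaged in Lemma \ref{lem:CR} and Proposition \ref{prop:minunc}, and the only real content of the proof is the bookkeeping that aligns the sign of the CR operator on the group with the annihilation condition on the representation space. The antihermitian convention for the scalar product contributes exactly the sign flip needed, so the two conditions are compatible and the corollary follows immediately; no additional regularity or domain argument is required, since $u^{\lambda,\Omega}$ is smooth on $S^1$ and hence lies in the domain of both $d\Pi^\Omega(X_1)$ and $d\Pi^\Omega(X_2)$.
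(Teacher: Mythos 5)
Your proposal is correct and takes essentially the same route as the paper: its one-sentence proof of the corollary invokes Lemma \ref{lem:CR} together with the annihilation identity $\left(d\Pi^\Omega(X_2) - i\lambda\, d\Pi^\Omega(X_1)\right)u^{\lambda,\Omega} = 0$, which is precisely the chain you spell out. Your explicit handling of the antilinearity-induced sign flip --- converting $X_2 + i\lambda X_1$ on the group side into $d\Pi^\Omega(X_2) - i\lambda\, d\Pi^\Omega(X_1)$ inside the scalar product, which then reduces to equation (\ref{eq:minunc}) --- is exactly the bookkeeping the paper leaves implicit, so the two arguments coincide in substance.
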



\subsection{The target space}

These notions allow us to define the Hilbert space of surjectivity for the $SE(2)$-Bargmann transform.

\begin{defi}
We define the $SE(2)$-Bargmann space as
\begin{equation}\label{eq:eFock}
\PF{\lambda}{\Omega} = \Hil_\Omega(SE(2)) \cap CR^\lambda(SE(2))\ .
\end{equation}
\end{defi}
\noindent
On the basis of this definition we can conclude the proof of our main result.

\begin{proof}[Proof of Theorem \ref{theo:main}, ii)]\ \\
We shall prove here that $\B_{\Omega}^{\lambda}: L^2(S^1) \rightarrow \PF{\lambda}{\Omega}$ is an isometric surjection.
Recall that by Corollary \ref{cor:CR} we know that the $SE(2)$-Bargmann transforms are $CR^\lambda(SE(2))$ functions, while Theorem \ref{theo:main}, i) and the normalization of $u^{\lambda,\Omega}$ ensure that this is an isometry.

To prove surjectivity, consider a function $F(q,\theta) \in \PF{\lambda}{\Omega}$. Since it satisfies (\ref{eq:CRlambda}), its Fourier transform with respect to the $q$ variables is such that
\begin{equation}\label{CRf}
\left(\partial_\theta + \lambda \kappa \sin(\theta - \varphi)\right) \F F(\kappa\cos\varphi,\kappa\sin\varphi,\theta) = 0\ .
\end{equation}
Hence
\begin{displaymath}
\F F(\kappa\cos\varphi,\kappa\sin\varphi,\theta) = c\,e^{\lambda \kappa \cos(\theta - \varphi)} \Phi(\varphi) g(\kappa)
\end{displaymath}
for some $\Phi(\varphi), g(\kappa)$. But since $F$ is in $\mathcal{H}_\Omega(SE(2))$, then $\Phi \in L^2(S^1)$ and $g(\kappa) = c\, \delta(\kappa - \Omega)$, so any $F(q,\theta) \in \PF{\lambda}{\Omega}$ is determined by a $\Phi \in L^2(S^1)$.
\end{proof}

\subsection{Relation with the Bargmann transform}

In this section we will interpret the $SE(2)$-Bargmann transform as the restriction of the ordinary Bargmann transform on $L^2(\R^2)$ to the real submanifold (\ref{eq:realsubm}).

If we denote Euclidean translations with $\tau$ and modulations with $\mu$, i.e.
\begin{displaymath}
\tau(q)\mu(p)f(x) = e^{ip\cdot (x - q)} f(x - q)
\end{displaymath}
then the Bargmann transform \cite{BargmannII} is
\begin{equation}\label{eq:H2Barg}
\B^{\Heis^2}_\sigma f(q,p) = e^{\frac{\sigma^2 |p|^2}{2}} \langle \tau(q)\mu(p)g_0 , f \rangle_{L^2(\R^n)}
\end{equation}
where $g_0$ is an $L^2(\R^2)$ normalized Gaussian of width $\sigma$
$$
g_0(x) = \frac{1}{\sigma\sqrt{\pi}} e^{-\frac{|x|^2}{2\sigma^2}} .
$$

For notation purposes we recall the following well known result.
\begin{prop}\label{BH2}
The Bargmann transform is an isometric surjection from $L^2(\R^2)$ onto the Fock-Bargmann space
\begin{displaymath}
\Fock_\sigma = L^2(\R^2_q \times \R^2_p, e^{-\frac{\sigma^2 |p|^2}{2}}dqdp) \cap Hol(\C^2_{z_1,z_2})
\end{displaymath}
with holomorphy with respect to the complex structure $z_j = p_j + i \frac{q_j}{\sigma^2}$
\begin{equation}\label{analytic}
\left(\partial{p_j} + i\sigma^2\partial{q_j}\right) \B^{\Heis^2}_\sigma f = 0 \quad , \quad j=1,2 \ .
\end{equation}
\end{prop}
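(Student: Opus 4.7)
My plan mirrors the proof of Theorem \ref{teo:isosurj}, now specialised to the Schr\"odinger representation of the Heisenberg group $\Heis^2$ on $L^2(\R^2)$, and splits into isometry, verification of the holomorphy condition (\ref{analytic}), and surjectivity.

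For the isometry, I exploit that $\{\tau(q)\mu(p)g_0\}_{(q,p)\in\R^2\times\R^2}$ is the orbit of the Gaussian $g_0$ under a representation that is square integrable modulo the center of $\Heis^2$. The corresponding orthogonality relations, applied to the unit-norm fiducial vector $g_0$, give the Plancherel identity
$$
\int_{\R^2\times\R^2}\big|\langle\tau(q)\mu(p)g_0,f\rangle_{L^2(\R^2)}\big|^2\,dq\,dp=\|f\|^2_{L^2(\R^2)}\,,
$$
and absorbing the prefactor $e^{\sigma^2|p|^2/2}$ of (\ref{eq:H2Barg}) into the compensating weight $e^{-\sigma^2|p|^2/2}$ of $\Fock_\sigma$ rewrites this as the desired identity $\|\B^{\Heis^2}_\sigma f\|_{\Fock_\sigma}=\|f\|_{L^2(\R^2)}$.

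For the holomorphy, I observe that $g_0$ satisfies the ground state equation $(\partial_{x_j}+x_j/\sigma^2)g_0=0$, which at the representation level is precisely the statement that a complex combination of the Heisenberg generators annihilates $g_0$; this is the analogue of Proposition \ref{prop:minunc} in the $SE(2)$ setting. Exactly as in Lemma \ref{lem:CR}, left invariance transports this annihilation into a first order differential identity on the matrix coefficient, and a short direct differentiation of (\ref{eq:H2Barg}) under the integral confirms that the factor $\sigma^2p_j+i(x_j-q_j)$ produced by $\partial_{p_j}$ cancels exactly with the factor produced by $i\sigma^2\partial_{q_j}$, yielding (\ref{analytic}).

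Surjectivity is where I expect the proof to be most delicate. The isometry and holomorphy already show that the image of $\B^{\Heis^2}_\sigma$ is a closed subspace of $\Fock_\sigma$, and the plan is to exhibit a complete orthonormal system inside this image: applying $\B^{\Heis^2}_\sigma$ to the Hermite basis $\{H_\alpha\}$ of $L^2(\R^2)$ produces, up to a common Gaussian prefactor, the monomials $z^\alpha$ in the variables $z_j=p_j+iq_j/\sigma^2$, which form the standard orthonormal basis of the holomorphic $L^2$ subspace of $\Fock_\sigma$. Since $\B^{\Heis^2}_\sigma$ is isometric and $\{H_\alpha\}$ spans $L^2(\R^2)$, matching these bases yields surjectivity. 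The main obstacle is the explicit bookkeeping of $\sigma$-dependent normalisation constants required to align the weight $e^{-\sigma^2|p|^2/2}$ with the natural Gaussian measure on $\C^2$ underlying the standard Bargmann construction, and in particular to check that the Hermite-to-monomial correspondence is both isometric and exhaustive.
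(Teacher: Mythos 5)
The paper offers no proof of Proposition \ref{BH2} at all: it is recalled ``for notation purposes'' as a well-known result, with the actual proof residing in Bargmann's paper \cite{BargmannII}. So there is no internal argument to compare against, and your proposal must be judged on its own. Its architecture --- orthogonality relations for the representation square integrable modulo the center (isometry), the ground-state equation $(\partial_{x_j}+x_j/\sigma^2)g_0=0$ transported to the matrix coefficient in the spirit of Lemma \ref{lem:CR} (holomorphy), and the Hermite-to-monomial correspondence (surjectivity) --- is exactly Bargmann's classical proof, and as a plan it is sound.

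There is, however, one concrete step that fails as written: the claimed absorption of the prefactor into the weight. From (\ref{eq:H2Barg}) one has $|\B^{\Heis^2}_\sigma f(q,p)|^2=e^{\sigma^2|p|^2}\,|\langle\tau(q)\mu(p)g_0,f\rangle|^2$, so multiplying by the stated weight $e^{-\sigma^2|p|^2/2}$ leaves an uncancelled factor $e^{+\sigma^2|p|^2/2}$: the exponents you assert to compensate differ by a factor of $2$. What the orthogonality relations actually yield (together with a constant $(2\pi)^2\|g_0\|^2$ under the paper's Fourier normalization, which you also drop) is $\int|\B^{\Heis^2}_\sigma f|^2\,e^{-\sigma^2|p|^2}\,dq\,dp=(2\pi)^2\|f\|^2$, i.e.\ the correct weight is the \emph{square} of the printed one (equivalently, the prefactor in (\ref{eq:H2Barg}) should be $e^{\sigma^2|p|^2/4}$). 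This is evidently a typo in the paper's statement, but a blind proof has to detect the mismatch rather than assert the cancellation. The same factor of two is essential in your surjectivity step: with $z_j=p_j+iq_j/\sigma^2$ and $z^2=z_1^2+z_2^2$, the Hermite basis $\phi_\alpha$ maps to $c_\alpha\, z^\alpha e^{\sigma^2 z^2/4}$, and it is $|e^{\sigma^2 z^2/4}|^2 e^{-\sigma^2|p|^2}=e^{-\frac{\sigma^2}{2}|z|^2}$ that produces the rotation-invariant Gaussian making the monomials orthogonal and total; with the printed weight one gets instead $|z^\alpha|^2 e^{-|q|^2/(2\sigma^2)}$, which is not even integrable in $p$, so your candidate orthonormal system would not lie in the space. (The ``common Gaussian prefactor'' you invoke is thus $e^{\sigma^2 z^2/4}$, a Gaussian in $z$ rather than in $|z|$; this is precisely the bookkeeping your last sentence defers, and it is exactly where the factor of two bites.) A last, minor point: in the holomorphy check the two factors you exhibit cancel only if the inner product is antilinear in the \emph{first} slot; with the right-antihermitian convention the paper declares, both $\partial_{p_j}$ and $i\sigma^2\partial_{q_j}$ produce the same factor $\sigma^2 p_j+i(x_j-q_j)$ and one obtains the conjugate of (\ref{analytic}) --- a convention-dependent sign the paper itself is loose about, but which a careful write-up should pin down.
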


In order to study the relation between (\ref{eq:SE2Barg}) and (\ref{eq:H2Barg}) we note that the $CR$ structure (\ref{eq:almostcomplex}) is inherited by the same complex structure of $\C^2$ used for (\ref{analytic}) with $\lambda = \sigma^2$. In particular this implies that (see e.g. \cite{BER}) if we restrict a function satisfying (\ref{analytic}) to the real submanifold (\ref{eq:realsubm}) we obtain exactly what we have called a $CR^{\sigma^2}(SE(2))$ function. Since the $p$ variables are cotangent variables, i.e. they belong to the frequency domain, the deep relation between the two Bargmann transforms is exploited by means of a restriction of the function to be transformed to circles in the frequency domain, i.e. by means of the quasi-projector $\Proj_\Omega$.

The main theorem of this section is the following.
\begin{theo}\label{theo:Bargmann}
Let $T_\Omega$ be a distribution in $\Hil_\Omega$ and $\hat{T}^\Omega_\Phi \in \Hil^\Omega$ be its distributional Fourier transform associated to $\Phi \in L^2(S^1)$. Then considering the extension of $\B^{\Heis^2}_\sigma$ to tempered distributions as in \cite{BargmannII}
$$
\B^{\Heis^2}_\sigma T_\Omega(q,p) = e^{-\frac{\sigma^2 \Omega^2}{2}} \frac{\sqrt{j_0(-2 i \sigma^2 |p| \Omega)}}{\sigma\sqrt{\pi}}  \B_{\Omega}^{\sigma^2 |p|}\Phi(q,\theta)
$$
where $p = |p|(\cos\theta,\sin\theta)$.
\end{theo}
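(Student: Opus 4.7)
The plan is to use Fourier duality in order to turn the action of $\B^{\Heis^2}_\sigma$ on the tempered distribution $T_\Omega$ into an evaluation of the Fourier transform of $\tau(q)\mu(p)g_0$ on the circle of radius $\Omega$, and then to recognize this evaluation as a scalar multiple of $\B^{\sigma^2|p|}_\Omega\Phi(q,\theta)$ via the explicit form of the minimum uncertainty wavelet $u^{\sigma^2|p|,\Omega}$ of Proposition \ref{prop:minunc}.

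First, since $\tau(q)\mu(p)g_0\in\Sw(\R^2)$ for every $(q,p)$, the distributional extension of $\B^{\Heis^2}_\sigma$ constructed in \cite{BargmannII} expresses $\B^{\Heis^2}_\sigma T_\Omega(q,p)$ via the $\Sw'$--$\Sw$ pairing of $T_\Omega$ against $\tau(q)\mu(p)g_0$. Applying the distributional Parseval identity and using the defining property of $\hat T^\Omega_\Phi$ from Lemma \ref{lem:fextension}, the pairing is rewritten as an angular integral over $S^1$ of $\overline{\Phi(\varphi)}$ times the restriction of $\widehat{\tau(q)\mu(p)g_0}$ to the circle of radius $\Omega$.

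Second, a direct Gaussian computation gives $\widehat{\tau(q)\mu(p)g_0}(k) = e^{-ik\cdot q}\hat g_0(k-p)$, and restricting to $k=(\Omega\cos\varphi,\Omega\sin\varphi)$ while writing $p=|p|(\cos\theta,\sin\theta)$ reduces everything to the algebraic identity $|k-p|^2=\Omega^2+|p|^2-2\Omega|p|\cos(\varphi-\theta)$. The Gaussian factor then splits as $e^{-\sigma^2\Omega^2/2}\,e^{-\sigma^2|p|^2/2}\,e^{\sigma^2|p|\Omega\cos(\varphi-\theta)}$, with the middle term cancelling the weight $e^{\sigma^2|p|^2/2}$ in front of the Bargmann transform. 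I then identify $e^{\sigma^2|p|\Omega\cos(\varphi-\theta)}=\sqrt{j_0(-2i\sigma^2|p|\Omega)}\,u^{\sigma^2|p|,\Omega}(\varphi-\theta)$ from Proposition \ref{prop:minunc}, and the remaining phase $e^{-i\Omega(q_1\cos\varphi+q_2\sin\varphi)}$ as the prefactor in $\Pi^\Omega(q,\theta)$ from (\ref{eq:S1representation}), so that the angular integral equals $\sqrt{j_0(-2i\sigma^2|p|\Omega)}\,\langle\Pi^\Omega(q,\theta)u^{\sigma^2|p|,\Omega},\Phi\rangle_{L^2(S^1)}=\sqrt{j_0(-2i\sigma^2|p|\Omega)}\,\B^{\sigma^2|p|}_\Omega\Phi(q,\theta)$ by (\ref{eq:SE2Barg}). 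Collecting the constants from $\hat g_0$ and the uncancelled Gaussian factor $e^{-\sigma^2\Omega^2/2}$ yields the stated identity.

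The main obstacle is notational rather than conceptual: aligning the right-antihermitian conventions (so that $\overline{\Phi}$ appears on the $L^2(S^1)$ side matching the scalar product in (\ref{eq:SE2Barg})) and justifying the distributional Parseval step, which is legitimate because $\widehat{\tau(q)\mu(p)g_0}\in\Sw(\hat\R^2)$ is a bona fide test function against $\hat T^\Omega_\Phi\in\Hil^\Omega\subset\Sw'(\hat\R^2)$. Once these conventions are set, the algebraic heart of the proof is the polar expansion of $|k-p|^2$, which makes the shape of the Gaussian $\hat g_0(k-p)$ restricted to the circle of radius $\Omega$ coincide exactly with that of the minimum uncertainty state $u^{\sigma^2|p|,\Omega}$, thereby translating the ordinary Bargmann transform into the $SE(2)$-Bargmann one.
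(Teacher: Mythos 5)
Your proposal is correct and follows essentially the same route as the paper's proof: Fourier duality to move the pairing of $T_\Omega$ against $\tau(q)\mu(p)g_0$ to the frequency side, the polar splitting of the Gaussian $e^{-\frac{\sigma^2|k-p|^2}{2}}$ on the circle $|k|=\Omega$ (with $e^{-\frac{\sigma^2|p|^2}{2}}$ cancelling the Bargmann weight), and the identification of $e^{\sigma^2|p|\Omega\cos(\varphi-\theta)}$ with $\sqrt{j_0(-2i\sigma^2|p|\Omega)}\,u^{\sigma^2|p|,\Omega}(\varphi-\theta)$ via Proposition \ref{prop:minunc}. The only (harmless) variation is that you pair the distribution directly against the Schwartz window, whereas the paper first proves the identity for $\Proj_\Omega f$ with $f\in\Sw(\R^2)$ and then extends to all of $\Hil_\Omega$ by density via Lemmas \ref{lem:fextension} and \ref{lem:extension}; your direct pairing is legitimate since $\hat{T}^\Omega_\Phi$ acts explicitly on test functions, so it streamlines that last step.
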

\begin{proof}
We first show that if $f$ in $\Sw(\R^2)$, then
\begin{equation}\label{dum2}
\B^{\Heis^2}\Proj_\Omega f(q,p) = e^{-\frac{\sigma^2 \Omega^2}{2}} \frac{\sqrt{j_0(-2 i \sigma^2 |p| \Omega)}}{\sigma\sqrt{\pi}}  \B_{\Omega}^{\sigma^2 |p|}\hat{f}^{\Omega}(q,\theta)\ .
\end{equation}
This can be seen by Fourier duality since
\begin{eqnarray*}
\B^{\Heis^2}\Proj_\Omega f(q,p) & = & e^{\frac{\sigma^2 |p|^2}{2}} \langle \tau(q)\mu(p)g_0 , \Proj_\Omega f \rangle_{\Sw'(\R^2),\Sw(\R^2)} \\
\hspace{-100pt} & \hspace{-100pt} = & \hspace{-50pt} e^{\frac{\sigma^2 |p|^2}{2}} \langle \F\tau(q)\mu(p)g_0 , \Proj^\Omega\F f \rangle_{\Sw'(\hat{\R}^2),\Sw(\hat{\R}^2)}\\
\hspace{-100pt} & \hspace{-100pt} = & \hspace{-50pt} \frac{1}{\sigma\sqrt{\pi}} \displaystyle{e^{\frac{\sigma^2 |p|^2}{2}} \int_{\R^2} e^{ik\cdot q} e^{-\frac{\sigma^2|k-p|^2}{2}} \Proj^\Omega\hat{f}(k)} dk\\
\hspace{-100pt} & \hspace{-100pt} = & \hspace{-50pt} \frac{1}{\sigma\sqrt{\pi}} \displaystyle{\int_0^{+\infty} d\kappa \kappa e^{-\frac{\sigma^2 \kappa^2}{2}}\int_{S^1} d\varphi e^{i\kappa(q_1\cos\varphi + q_2\sin\varphi)}e^{\sigma^2 |p| \kappa\cos(\varphi - \theta)} \Proj^\Omega\hat{f}(k)}\\
\hspace{-100pt} & \hspace{-100pt} = & \hspace{-50pt} \frac{\sqrt{j_0(-2 i \sigma^2 |p| \Omega)}}{\sigma\sqrt{\pi}} e^{-\frac{\sigma^2 \Omega^2}{2}} \langle \Pi^{\Omega}(q,\theta)u^{\sigma^2 |p|,\Omega} , \hat{f}^\Omega \rangle_{L^2(S^1)}
\end{eqnarray*}
By the results of Section \ref{sec:HilOmega} we can take $f$ in $\Hil_\Omega$, thus extending (\ref{dum2}) to the whole $\Hil_\Omega$.
\end{proof}

\end{document}